\numberwithin{equation}{section}
\newtheorem{thm}{Theorem}[section]
\newenvironment{thmv}[1]{\begin{thm}%
}{\end{thm}}
\newtheorem{cor}[thm]{Corollary}
\newtheorem{defn}[thm]{Definition}
\begin{document}

\title[Uniqueness for Hypersurfaces in Hyperbolic Space]{Uniqueness of Hypersurfaces of  Constant Higher Order Mean Curvature  in Hyperbolic Space}
\author{Barbara Nelli, Jingyong Zhu}
\thanks{}
\thanks{}

\address{}
\address{Barbara Nelli: DISIM, Universit\'a degli Studi dell'Aquila, Italy}
\email{barbara.nelli@univaq.it}
\address{Jingyong Zhu: Max Planck Institute for Mathematics in the Sciences, Inselstrasse 22, 04103 Leipzig, Germany}
\email{jizhu@mis.mpg.de}

\subjclass[2010]{}%
\keywords{Bernstein theorem, rigidity, immersed, higher order mean curvature, hyperbolic space.}


\begin{abstract}
We study the uniqueness of horospheres  and equidistant spheres in hyperbolic space under different conditions.  First we generalize the Bernstein theorem by  Do Carmo and Lawson \cite{do1983alexandrov} to the embedded hypersurfaces with constant higher order mean curvature. Then we prove two Bernstein type results for immersed hypersurfaces under different assumptions.
 Last, we show the rigidity of horospheres and equidistant spheres in terms of their higher order mean curvatures.
\end{abstract}
\maketitle


\section{Introduction}

In 1927,  S. N. Bernstein proved that the only entire minimal graphs in $\mathbb{R}^3$ are planes. The analogous problem  in higher dimension is known as  Bernstein problem. Namely, given $u: \mathbb{R}^n\to \mathbb{R}$ a minimal graph, is the graph of $u$ a flat hyperplane?  It turns out that the answer is yes for $n\leq7$ and that  has been settled down by a series of very significative papers \cite{almgren1966some,de1965estensione,fleming1962oriented,simons1968minimal}.

On the contrary, for $n\geq8$, one has the famous counterexamples by E. Bombieri, E. De Giorgi and  E. Giusti \cite{BDG}. Afterwards, many generalizations of Bernstein theorem have arised. As an example, we mention 
\cite{schoen1975curvature}
where  R. Schoen, L. Simon and  S. T. Yau studied a Bernstein type theorem for  stable minimal hypersurfaces. 

Later on,  Bernstein type  theorems  for constant mean curvature  hypersurfaces in Euclidean  and  in hyperbolic space 
$\mathbb{H}^{n+1}$ have been studied.   Let  us give a very simple example in the Euclidean space. 
Does it exist an entire graph $M$  in $\mathbb{R}^{n+1}$ with constant mean curvature $H\not=0$?
It is well known that  the answer is no and here is the proof. Assume $M$ exists. Without loss of generality, we can assume that the mean curvature vector of $M$ points upward. Consider a sphere $\mathcal S$  in $\mathbb{R}^{n+1}$ of mean curvature $H$.
As $\mathcal S$ is compact, up to an ambient isometry, we can assume that $\mathcal S$ is above $M.$ Then, translate down 
$\mathcal S.$ Clearly, there will be a first contact point  $p$ between $M$ and $\mathcal S.$  At $p,$  $\mathcal S$  and $M$ are tangent and  applying the maximum principle,  one gets that $\mathcal S$ and $M$ should coincide, that is a contradiction.

 In the hyperbolic space, there is more variety  of constant mean curvature hypersurfaces.
We point out three important results   that have been proved throughout history.

{\em

A complete hypersurface $\Sigma$ of  ${\mathbb H}^{n+1}$ with constant mean curvature   is a horosphere, provided:

\begin{enumerate}
\item  \label{DL} $\Sigma$ is properly embedded and has  exactly one point in its asymptotic boundary \cite[Theorem A]{do1983alexandrov}.
\item \label{AD} $n=2$ and $\Sigma$ is properly immersed between two horospheres  in ${\mathbb H}^3$ with the same asymptotic point \cite[Theorem 1]{alias2006uniqueness}.
\item \label{BQJ} $\Sigma$ is immersed,  has all the principal curvatures uniformly are larger than $-1$  and has  exactly one point in its asymptotic boundary \cite[Theorem 1.5]{bonini2017weakly}.
\end{enumerate}
}

We will study analogous problems for hypersurfaces with constant higher order  mean curvature functions 
 ($H_r$-hypersurface in the following). Our results are generalizations of the three statements above.
  Moreover, motivated by the recent work by R. Souam \cite{souam2019mean}, we show the $r$-mean curvature rigidity of horospheres and equidistant spheres (notice that $H_r$ may be zero). 
  
  The article is organized as follows. In Section 2, we fix notations and collect some preliminary results. 
 The result of  Section 3 is  the following  uniqueness theorem for  horospheres and equidistant spheres, which is a generalization of \eqref{DL}.
  
\begin{thmv}{DL r-mean}
 Let  $\Sigma$ be a complete $H_r$-hypersurface properly embedded in the hyperbolic space $\mathbb{H}^{n+1},$ $r\geq 2.$ Denote the asymptotic boundary of $\Sigma$ by $\partial_\infty\Sigma.$ Then we have:

(1) if $\partial_\infty\Sigma$ is a point, then $\Sigma$ is a horosphere;

(2) if $\partial_\infty\Sigma$ is a sphere and $\Sigma$ separates poles, then $\Sigma$ is a equidistant sphere.
\end{thmv}

 For the definition of hypersurface separating poles, see Section  3.
For $r=1$ the result is contained in \cite[Theorem B]{do1983alexandrov}.
 
In Section  4,  we consider the Bernstein problem for immersed hypersurfaces, either with constant  $r$-mean curvature 
 or satisfying a Weingarten  equation. In particular, we generalize  results by  L. Al{\'\i}as and  M. Dajczer,  \cite{alias2006uniqueness}, which concerns a more general problem in  warped products, by L.  Al{\'\i}as, D.  Impera and M. Rigoli \cite{alias2013hypersurfaces} and  by  Bonini, Qing and the second author \cite{bonini2017weakly}. Let us mention our main results in Section 4.

\begin{thmv}{theorem-parabolic}
Let $\Sigma$  be a $r$-admissible, $L_{r-1}$-parabolic  $H_r$-hypersurface properly immersed in ${\mathbb H}^{n+1}$. If  $\Sigma$  is contained in a slab  and  the angle function does not change sign, then $\Sigma$ is a horosphere.
\end{thmv}

\begin{thmv}{weakly horospherically convex}
Let $\Sigma$ be an immersed, complete, uniformly admissible Weingarten hypersurface  in $\mathbb{H}^{n+1}.$  Then $\Sigma$  is a horosphere provided its asymptotic boundary is a single point.
\end{thmv}

A slab is the space between two horospheres that share the same asymptotic point. For the definition of  $r$-admissibility, $L_{r-1}$-parabolicity,  uniformly admissible Weingarten hypersurface, see Section 2.

Finally, in Section 5,  we show the $r$-mean curvature rigidity of horospheres and equidistant spheres, which generalizes  \cite{souam2019mean}.

\begin{thmv}{rigidity}
Let $M$ be a horosphere or a equidistant sphere in hyperbolic space $\mathbb{H}^{n+1}$, $n\geq2$ and $H_M>0$ denote its $r$-mean curvature, $r\geq1$, with respect to the orientation given by the mean curvature vector. Let $\Sigma$ be a connected properly embedded  $C^2$ hypersurface in $\mathbb{H}^{n+1}$ which coincides with $M$ outside a compact subset $B$ in $\mathbb{H}^{n+1}$. Choose the orientation on $\Sigma$ such that the $r$-mean curvature $H_r$ of $\Sigma$ is equal to $H_M$ outside the compact set $B$. With respect to this orientation, if either $H_r\geq H_M$ or $|H_r|\leq H_M$, then $\Sigma=M$.
\end{thmv}

{\bf Acknowledgements}
The first author  was partially supported by  INdAM-GNSAGA.
Part of the present work was done during the visit of the second author to Scuola Normale Superiore. The second author would like to thank Professor J\"urgen Jost and Professor Andrea Malchiodi for their support to his visit.

\section{Preliminaries}

\subsection{Models for the hyperbolic space ${\mathbb H}^{n+1}$}
 \label{models}
 We will work in different well-known models for the hyperbolic space. For the sake of completeness we briefly describe them.
 
 \ 
 
 {\em The half-space model.}  Consider the  upper half-space 
  \begin{align*}
   \mathbb{R}^{n+1}_+=\{(x_1,\cdots,x_{n+1})\in\mathbb{R}^{n+1}|x_{n+1}>0\}
   \end{align*}
    with the metric $\frac{dx_1^2+\dots+dx_{n+1}^2}{x_{n+1}^2}.$ In this model, horospheres are either horizontal hyperplanes
    or Euclidean spheres tangent at  some point to  the hyperplane $\{x_{n+1}=0\}$. Moreover,  the intersections of the upper half-space with Euclidean spheres not contained in the upper half-space are totally umbilical hypersurfaces, whose absolute values of the principal curvatures are strictly less than $1$.  Such hypersurfaces are usually called {\em equidistant spheres} when the principal curvatures are not zero. The ones with centers on the hyperplane $\{x_{n+1}=0\}$  are {\em (totally geodesic) hyperplanes}. 
\

{\em The warped product model.} $\mathbb{H}^{n+1}$ can be viewed as the warped product $\mathbb{R}\times_{e^t}\mathbb{R}^n$, that  is the product manifold ${\mathbb R}\times\mathbb{R}^n$ endowed with the following metric
	\begin{equation}
    \langle\cdot,\cdot\rangle=\pi_1^*(dt^2)+e^{2t}\pi_{2}^*(\langle\cdot,\cdot\rangle_{\mathbb{R}^{n}}),
    \end{equation}
    where $\pi_1$ and $\pi_2$ denote the projections onto the two factors  and $\langle\cdot,\cdot\rangle_{\mathbb{R}^{n}}$ is the Euclidean metric. Notice that the  leafs $\mathbb{R}_t=\{t\}\times\mathbb{R}^n$ are  horospheres with r-mean curvature one, for every 
    $r=1,\dots,n$   with respect to $-T$, where $T$ is the lift of $\frac{\partial}{\partial t}$. All the $\mathbb{R}_t$'s share the same point at infinity.
    For an immersed hypersurface $\Sigma$ of $\mathbb{R}\times_{e^t}\mathbb{R}^n$, oriented by $\nu$, we define the height function $h\in C^\infty(\Sigma)$ to be the restriction of $\pi_{1}$ to $\Sigma$ and  the angle function by $\Theta=\langle\nu,T\rangle$.

    \ 
    
{\em The hyperboloid model.} For $n\geq2$, The  Minkowski space $\mathbb{L}^{n+2}$, is the vector space $\mathbb{R}^{n+2}$ endowed with the Lorentzian metric $\langle, \rangle$ given by\begin{equation*}
\langle \bar{x},\bar{x}\rangle=-x_0^2+\sum_{i=1}^{n+1}x_i^2,
\end{equation*}
where $\bar{x}=(x_0,x_1,\dots,x_{n+1})\in\mathbb{R}^{n+2}$. Then hyperbolic space, de Sitter spacetime and the positive null cone are given by
\begin{equation*}
\begin{split}
& \mathbb{H}^{n+1}=\{\bar{x}\in\mathbb{L}^{n+2}|\langle \bar{x},\bar{x}\rangle=-1, x_0>0\},\\
&\mathbb{S}^{1,n}=\{\bar{x}\in\mathbb{L}^{n+2}|\langle \bar{x},\bar{x}\rangle=1\},\\
&\mathbb{N}^{n+1}_+=\{\bar{x}\in\mathbb{L}^{n+2}|\langle \bar{x},\bar{x}\rangle=0, x_0>0\},
\end{split}  
\end{equation*}
respectively. We identify the ideal boundary at infinity of hyperbolic space $\mathbb{H}^{n+1}$ with the unit round sphere $\mathbb{S}^n$ sitting at height $x_0=1$ in the null cone $\mathbb{N}^{n+1}_+$ of Minkowski space $\mathbb{L}^{n+2}$.
 Here, horospheres are the intersections of affine {\em null hyperplanes}  of $\mathbb{L}^{n+2}$ with $\mathbb{H}^{n+1}.$
 A {\em null hyperplane} is such that   its normal vector  field   belongs to $\mathbb{N}^{n+1}_+.$
 
\subsection{The $k$-mean curvatures $H_k$}

Let $\Sigma$ be an orientable,  connected, immersed hypersurface in hyperbolic space $\mathbb{H}^{n+1}$. Let 
$\nu$ be an orientation on $\Sigma$ and  denote by $A$ the second fundamental form of the immersion with respect to  $\nu$. Denote by  $\kappa_1,\cdots,\kappa_n$  the principal curvature of $\Sigma$, that is the eigenvalues of $A.$ 
 The $k$-mean curvatures $H_k$ of $\Sigma$,  $1\leq k\leq n$, is defined by 
\begin{equation}
\binom{n}{k}H_k(x)=\sigma_k(\kappa(x)).
\end{equation}

where $\sigma_k: \mathbb{R}^n\to\mathbb{R}$ is  the  $k$-elementary symmetric function  defined by
\begin{equation}
\sigma_k(\lambda_1,\dots,\lambda_n)=\sum_{i_1<i_2<\cdots<i_k}\lambda_{i_1}\cdots \lambda_{i_k}
\end{equation}

Thus, $H_1$ is the mean curvature, $H_n$ is the Gauss-Kronecker curvature and $H_2$ is a multiple of the scalar curvature, when the ambient space is Einstein.
Functions like $\sigma_k$ are a particular case of  hyperbolic polynomials (see   \cite{Garding1959}).

    It  was proved  by R. Reilly in \cite{reilly1973} that  the study of the  $k$-mean curvatures is related to the study of the classical Newton transformations $P_{k}$, that are defined inductively as follows. 
    
$$\begin{array}{l}
\label{Pk}
P_{0}= I,\\
P_{k}=\sigma_{r}I-AP_{k-1},\\
\end{array}$$
where $I$ is the identity matrix and $A$ is a symmetric matrix. Each $P_{k}$ is a self-adjoint operator
 that has the same eigenvectors of $A$.

Before establishing the relation between $P_k$ and $H_k$, let us recall that 
 J. L. Barbosa  and G. Colares extended the relation to space forms  \cite{barbosacolares1997} and M. F. Elbert to any Riemannian manifold \cite{elbert2002} (see also \cite{elbertnelli2019}).

Let $f:\Sigma\longrightarrow {\mathbb H}^{n+1}$ be an isometric immersion of a
connected oriented Riemannian $n$-manifold into the hyperbolic space  and let $A$  its second fundamental form with respect to 
an orientation $\nu.$
Let $D\subset \Sigma$ be a domain. A variation of $D$ is a differentiable
map $F:(-\varepsilon,\varepsilon)\times \bar{D}\longrightarrow
{\mathbb H}^{n+1}$, $\varepsilon >0$, such that for each $t\in 
(-\varepsilon,\varepsilon)$ the map  $F_{t}:\{t\}\times \bar{D}
\longrightarrow{\mathbb H}^{n+1}$ defined by $F_{t}(p)=F(t,p)$ is an immersion
and $F_{0}=f|_{\bar{D}}$. Define $V_{t}(p)=\frac{\partial F}{\partial t}(t,p)$ and  $u(t)=\left<V_{t},\nu_{t}\right >,$ where $\nu_{t}$ is the unit normal vector field in $F_{t}(D)$ such that $\nu_0=\nu.$  We say that a variation $F$ of $D$ has  compact support if ${\rm supp}(F_{t})\subset K$, for all $t\in (-\varepsilon,\varepsilon)$,
where $K\subset D$ is a compact domain.
Let  $H_k^t$ the $k$-mean curvature  of $F_{t}$, and $\sigma_k^t= {\binom{n}{k}} H_k^t.$ Then one has

\begin{equation*}\frac{\partial}{\partial t}(\sigma_{k+1}^t)|_{t=0}=L_{k}(u)+u(\sigma_{1}\sigma_{k+1}-(k+2)\sigma_{k+2}-(n-k)\sigma_k)
+V^T(\sigma_{k+1})
\end{equation*}

where $L_k(u)=tr(P_k({\rm Hess}(u)))$ and $V^T$ is the projection of $V$ on $T\Sigma.$ Notice that, in the case $\sigma_{k+1}$ is constant, then the left-hand side and  the last term in the previous equality are  zero.

 \subsection{Ellipticity  of  $L_k$ and $L_k$-parabolicity}

As $L_k(u)=tr(P_k({\rm Hess}(u)))$,  $L_k$ is  an elliptic operator  if and only if $P_k$  is a positive definite matrix. In particular, $L_0$ is the Laplace-Beltrami operator $\Delta$.
Let us establish  a geometric condition that guarantees the  ellipticity of $L_k.$ 

Denote by $\Gamma_k$ the connected component in $\mathbb{R}^n$ of the set $\{H_k>0\}$ that
contains the vector $(1,\dots,1)$. As it is proved in \cite[Section 2]{fontenele2001tangency}, for any $k=1,\dots,n-1,$

\begin{equation}\label{garding ineq}
  \Gamma_{k+1}\subset \Gamma_k
     \end{equation}

Notice that $\Gamma_n$ is the positive cone in ${\mathbb R}^n.$ Moreover, since $\Gamma_1$ is the largest cone, the mean curvature is positive at any point where the principal curvatures vector stays in the cone $\Gamma_k$.

Moreover, we recall the  classical G$\mathring{\rm a}$rding inequality \cite{Garding1959}:
  \begin{equation}\label{garding-ineq2}
     H_1\geq H_2^{1/2}\geq\cdots\geq H_{k}^{1/k}\geq H_{k+1}^{1/{k+1}}>0,
     \end{equation}
providing all the $r$-mean curvature involved are positive.

\begin{defn}\label{k admissible}
A hypersurface $\Sigma$ of  $\mathbb{H}^{n+1}$ is called $k$-admissible if  the principal curvatures vector at any point of $\Sigma$  stays in the cone $\Gamma_k$, that is,

\begin{equation}
\lambda(x)=(\kappa_1(x),\dots, \kappa_n(x))\in\Gamma_k
\end{equation}
for all $x\in \Sigma$.

\end{defn}

It is well known that  the existence of an elliptic point on a $H_k$-hypersurface with $H_k>0$ yields   that the hypersurface is $k$-admissible \cite{Garding1959, fontenele2001tangency}. Moreover,  $k$-admissibility  yields that  $L_{k-1}$ is elliptic.



 In \cite{alias2013hypersurfaces}, L. Al{\'i}as, D.  Impera and M. Rigoli assumed $L_{k}$-parabolicity  to study the Bernstein type theorems for hypersurfaces with constant $k$-mean curvature in warped product spaces.
 
 \begin{defn}\label{def-parabolic}
 A  hypersurface  $\Sigma$ in $\mathbb{H}^{n+1}$ is $L_{k}$-parabolic if the only bounded above $C^1$ solutions $u:\Sigma\longrightarrow {\mathbb R},$ of the inequality 
 \begin{equation}
 L_k u\geq0
 \end{equation} 
are constants. 
 \end{defn}

    \subsection{Weakly horospherically convexity}

    Intuitively, a hypersurface is {\em weakly horospherically convex} at $p$ if and only if all the principal curvatures of the hypersurface  at $p$ are simultaneously $<-1$ or $>-1$. 
    
            For later use, we recall some basic definitions related to the normal geodesic flow in \cite{bonini2015hypersurfaces, bonini2017weakly}.

Let  $f:M\longrightarrow \mathbb{H}^{n+1}$ be an isometric immersion of an orientable connected Riemannian manifold of dimension $n$, 
and $\eta$ a unit normal vector field orienting $M.$
The {\em hyperbolic Gauss map }  $G$ of $M$ is
 defined as follows: for every $p\in M$, $G(p)$ is the point at infinity of the unique geodesic starting at $f(p)$ with tangent vector $-\eta(p).$

 Notice that $G(p)$ coincides with the point at infinity of the unique horosphere in $\mathbb{H}^{n+1}$ passing through $f(p)$ whose mean curvature vector  coincides with $-\eta(p)$ at $f(p)$.
 Moreover, with our notion of Gauss map,   an  horosphere oriented by the mean curvature vector ($\kappa_i=1$) has injective Gauss map.

   Now, we give a  notion of weak horospherical convexity, using the definition in \cite{bonini2015hypersurfaces, bonini2017weakly} (notice that  the orientation is different from that in \cite{espinar2009hypersurfaces}).
       
    \begin{defn}\cite{bonini2017weakly}
Let $f : M^n\to\mathbb{H}^{n+1}$ be an immersed, oriented hypersurface in $\mathbb{H}^{n+1}$ with unit normal  vector field $\eta.$   
Let $\mathcal{H}_p$ denote the horosphere in $\mathbb{H}^{n+1}$ that is tangent to the hypersurface at $f(p)$ and whose mean curvature vector at $f(p)$ coincides with $-\eta(p)$. We will say that $f : M^n\to\mathbb{H}^{n+1}$ is weakly horospherically convex at $p$ if there exists a neighborhood $V\subset M^n$ of $p$ so that $f(V\setminus\{p\})$ does not intersect with $\mathcal{H}_p$. Moreover, the distance function of the hypersurface $f : M^n\to\mathbb{H}^{n+1}$ to the horosphere $\mathcal{H}_p$ does not vanish up to the second order at $f(p)$ in any direction.
\end{defn}
 
  As we say at the beginning,  the  formal definition of weakly horospherically convex at a point $p$  implies that  all the principal curvatures of the hypersurface  at $p$ are simultaneously $<-1$ or $>-1$. By choosing the orientation, we may assume that all the principal curvatures of  a weakly horospherically convex  hypersurface  are  $>-1$.
We say that  a hypersurface is {\em uniformly}  weakly horospherically convex if  all the principal curvatures $\kappa_i$ are uniformly larger  than $-1$, i.e. $\kappa_i\geq c_0>-1$.

It is clear that  the Gauss map of a weakly horospherically convex  hypersurface is a local diffeomorphism, therefore, such hypersurface can be parametrized by a subset of  $\Omega\subset\mathbb{S}^n$. Now, 
 let $f: \Omega\subset{\mathbb S}^n\to\mathbb{H}^{n+1}$ be a  properly immersed, complete, and uniformly weakly horospherically convex hypersurface.

   The (past) normal geodesic flow $\{f^t\}_{t \in \mathbb{R}}$ in $\mathbb{H}^{n+1}$  of $f$ is  given by
\begin{equation}\label{Eq:NormalFlow}
f^t(x) := \exp_{f(x)}(-t\eta(x)) = f(x) \cosh{t} - \eta(x)\sinh{t}:\Omega \to \mathbb{H}^{n+1} \subset \mathbb{R}^{1,n+1}
\end{equation}
It is well known   \cite{bonini2015hypersurfaces, bonini2017weakly, espinar2009hypersurfaces} that  the principal curvatures $\kappa_i^t$ of $f^t$  are given by
\begin{equation}
\label{kappat}
\kappa_i^t = \frac{\kappa_i + \tanh{t}}{1+\kappa_i \tanh{t}}
\end{equation}
 Moreover, it is easily seen that the hyperbolic Gauss map $G^t$ is invariant under the normal geodesic flow.  
 
  \subsection{ Admissible Weingarten hypersurfaces.}
    
    We will briefly introduce the elliptic problem of Weingarten hypersurfaces \cite{bonini2015hypersurfaces} in our context, that is,  restricted to  weakly  horospherically convex hypersurfaces with the orientation under which  all the principal curvatures are simultaneously larger than $-1$.

    Let $\mathcal{W}(x_1,\cdots,x_n)$  be a symmetric function of $n$-variables, such that  $\mathcal{W}(\kappa_0,\cdots,\kappa_0)=0$ for some number $\kappa_0>-1$. Moreover, let 
 \begin{equation*}
   \mathcal{K}:=\{(x_1,\cdots,x_n)\in\mathbb{R}^n: x_i>-1, i=1,\cdots,n\},
    \end{equation*} 
  
  Let  $\Gamma_+=\{(x_1,\cdots,x_n) : x_1>0,\cdots,x_n>0\}$ and   $\Gamma^*$ be  an open connected component of  
 $\{(x_1,\cdots,x_n) : \mathcal{W}(x_1,\cdots,x_n)>0\}$
satisfying
   \begin{enumerate}[leftmargin=*]
   \item \label{positive}
$(\kappa,\cdots,\kappa)\in\Gamma^*\cap\mathcal{K},$ for every $\kappa\in (\kappa_0,\infty), $
    \item \label{connected}
    For every  $(x_1,\cdots,x_n)\in\Gamma^*\cap\mathcal{K},$  and  $(y_1,\cdots,y_n)\in \Gamma^*\cap\mathcal{K}\cap((x_1,\cdots,x_n)+\Gamma_+),$ 
   there exists a curve
$ \gamma $ connecting  $(x_1,\cdots,x_n)$  to $(y_1,\cdots,y_n)$ inside $\Gamma^*\cap\mathcal{K}$
 such that $\gamma'\in\Gamma_+ $ along  $\gamma$
  \item\label{elliptic}
$\mathcal{W}\in C^1(\Gamma^*)$ and $\frac{\partial \mathcal{W}}{\partial x_i}>0$ in $ \Gamma^*.$
\end{enumerate} 
Suppose $\Sigma$ is a hypersurface of $\mathbb{H}^{n+1}$  satisfying  the following  {\em general Weingarten equation}
\begin{equation}\label{weingarten}
   \mathcal{W}(\kappa_1,\cdots,\kappa_n)=K \ \text{and} \ (\kappa_1,\cdots,\kappa_n)\in \Gamma^*\cap\mathcal{K} \ \text{on} \ \Sigma,
   \end{equation} 
for some positive constant $K$, where $(\kappa_1,\cdots,\kappa_n)$ are the principal curvatures of the $\Sigma$.

 \begin{defn}
   In \eqref{weingarten}, a positive number $K$ is admissible for a given curvature function $\mathcal{W}$ if $\mathcal{W}(\bar\kappa_0,\cdots,\bar\kappa_0)=K$, $\frac{\partial\mathcal{W}}{\partial x_i}(\bar\kappa_0,\cdots,\bar\kappa_0)>0$, and $\bar\kappa_0>\kappa_0$.
   \end{defn}

  A  hypersurface  $\Sigma$ such that   \eqref{weingarten} is satisfied for an admissible constant is called {\em admissible Weingarten hypersurface}. If the principal curvatures have a uniform lower bound which is strictly bigger than $-1$, then we call it uniformly admissible Weingarten hypersurface.
In particular if $ \mathcal{W}$ is an elementary  symmetric function of the principal curvatures, all the assumptions are satisfied.
Hence a weakly horospherical convex, $r$-admissible $H_r$-hypersurface is an admissible Weingarten hypersurface.
   We will always   chose $K$ such that 
   \begin{equation}
   \label{K-adm}
   \mathcal{W}(\bar\kappa_0,\cdots,\bar\kappa_0)=K, \ \frac{\partial\mathcal{W}}{\partial x_i}(\bar\kappa_0,\cdots,\bar\kappa_0)>0
   \end{equation}
    for some  $\bar\kappa_0>\kappa_0$.

\section{Bernstein theorem for embedded hypersurfaces}

In this section, we extend the Bernstein type  theorem by Do Carmo and Lawson in \cite{do1983alexandrov} to 
hypersurfaces with constant $r$-mean curvature in the hyperbolic space. 
 Recall that $\partial_{\infty} {\mathbb H}^{n+1}$ has a natural conformal structure of a sphere ${\mathbb S}^n(\infty).$ When the
 asymptotic boundary of a hypersurface $\Sigma$ is a sphere in ${\mathbb S}^n(\infty),$ we can assume that it is an equator. We say that 
 $\Sigma$ separates poles if  the north and the south poles with respect to  such equator are in distinct connected components of 
 ${\mathbb H}^{n+1}\cup{\mathbb S}^n(\infty)\setminus(\Sigma\cup\partial_{\infty}\Sigma).$
 
\begin{thm}\label{DL r-mean}
Let  $\Sigma$ be a complete hypersurface properly embedded in hyperbolic space $\mathbb{H}^{n+1}$ with constant $r$-mean curvature $(r\geq2)$. Denote by $\partial_\infty \Sigma\subset {\mathbb S}^n(\infty)$ the asymptotic boundary of $\Sigma$. Then we have the following:
\begin{enumerate}[leftmargin=*]
\item  if $\partial_\infty \Sigma$ is a point, then $\Sigma$ is a horosphere;

\item  if $\partial_\infty \Sigma$ is a sphere and $\Sigma$ separates poles, then $\Sigma$ is an equidistant sphere.
\end{enumerate}
\end{thm}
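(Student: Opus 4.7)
My plan is to extend the moving-horosphere technique of Do Carmo and Lawson \cite{do1983alexandrov} to the higher-order operator $L_{r-1}$. The key novelty for $r \geq 2$ is twofold: locating an elliptic point on $\Sigma$ where all principal curvatures are strictly positive, and then upgrading this pointwise information to the global $r$-admissibility of $\Sigma$ via a G\aa{}rding-type continuation argument, so that $L_{r-1}$ becomes a genuinely elliptic operator on $\Sigma$ and the strong maximum principle for the $H_r$ equation is available.

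For case (1), I would work in the upper half-space model with $p_\infty = \infty$, so that the horospheres tangent at $p_\infty$ are the horizontal slices $\mathcal{H}_c = \{x_{n+1} = c\}$, each with all principal curvatures equal to $1$ and hence $H_r = 1$ with respect to the downward normal. Setting $c^* = \inf_\Sigma x_{n+1}$, the properness of $\Sigma$ together with the hypothesis $\partial_\infty \Sigma = \{p_\infty\}$ should force $c^* > 0$ to be attained at some interior point $p^* \in \Sigma$: a minimizing sequence cannot accumulate on $\partial_\infty \mathbb{H}^{n+1} \setminus \{p_\infty\}$ by hypothesis, and escape toward $p_\infty$ at bounded height can be excluded by sliding auxiliary horoballs at points $q \in \mathbb{R}^n$ as barriers. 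At $p^*$, the one-sided tangency of $\Sigma$ to $\mathcal{H}_{c^*}$ from above yields $\kappa_i(p^*) \geq 1$ for every $i$, providing the required elliptic point. The G\aa{}rding continuation principle (cf.\ \cite{fontenele2001tangency}) combined with constancy of $H_r$ then upgrades this to $\lambda(p) \in \Gamma_r$ for every $p \in \Sigma$, so $L_{r-1}$ is elliptic globally. Finally, the strong maximum principle / tangency principle applied to $\Sigma$ and $\mathcal{H}_{c^*}$ at the interior tangency $p^*$ yields coincidence in a neighborhood, and connectedness of $\Sigma$ propagates $\Sigma \equiv \mathcal{H}_{c^*}$.

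Case (2) is handled analogously, with the horosphere foliation replaced by the one-parameter family of equidistant spheres to the totally geodesic hyperplane $P$ whose asymptotic boundary equals the equator $\partial_\infty \Sigma$. The separates-poles hypothesis places $\Sigma$ in the slab between two extremal equidistant spheres obtained by sweeping from each pole of $\partial_\infty \Sigma$, so the signed distance to $P$ attains its extrema at interior tangent points; these provide elliptic points. G\aa{}rding continuation again gives global $r$-admissibility, and the tangency principle forces $\Sigma$ to coincide with one of the equidistant spheres of the foliation.

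I expect the principal technical obstacle to be the attainment claim for the height function (resp.\ signed distance) in Step 1: this is the geometric heart of the argument and requires careful use of properness together with the single-point (resp.\ spherical) asymptotic boundary. A secondary subtlety is ruling out the possibility that the constant value $H_r(\Sigma)$ exceeds that of the barrier, which I would handle by sliding a compact geodesic sphere of matching $H_r$ toward $\Sigma$ and extracting a contradiction at the first interior tangency, using that $\Sigma$ is non-compact. Once attainment and the matching of curvature values are secured, the remainder of the proof is the standard Do Carmo--Lawson moving-barrier scheme, now coupled with the G\aa{}rding globalization that is needed to render $L_{r-1}$ elliptic.
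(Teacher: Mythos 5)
Your scheme hinges on two steps that are not justified, and both are genuinely problematic. First, the attainment claim. In the half-space model a sequence $(\bar x_k,t_k)\in\Sigma$ with $|\bar x_k|\to\infty$ converges to $p_\infty$ in the compactification \emph{whatever} the heights $t_k$ do, so the hypothesis $\partial_\infty\Sigma=\{p_\infty\}$ does not prevent a minimizing sequence for $x_{n+1}$ from escaping to Euclidean infinity at bounded (even decaying) height; properness gives nothing there. Your proposed fix --- sliding horoballs based at finite boundary points --- is circular at that stage: to use a horosphere as a barrier via the tangency principle of \cite{fontenele2001tangency} you must already know the $r$-admissibility of $\Sigma$, the size of the constant $H_r$ relative to $1$, and the direction of the global normal at the would-be contact point, and all of these are exactly what the (not yet found) elliptic point was supposed to provide. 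The same issue recurs in case (2): an end of $\Sigma$ approaching the asymptotic equator can stay at positive distance from the hyperplane $P$ (e.g.\ be asymptotic to an equidistant sphere), so the extrema of the signed distance need not be attained. Second, even granting an interior lowest point $p^*$, the comparison there goes the wrong way: $\Sigma$ lies above $\mathcal H_{c^*}$ and the second-order touching gives $\kappa_i(p^*)\geq 1$, hence $H_r(\Sigma)\geq 1=H_r(\mathcal H_{c^*})$, whereas the tangency principle yields coincidence only when the \emph{upper} hypersurface has $H_r\leq$ that of the lower one; so you must first exclude $H_r>1$. The sphere-sliding you sketch for this does not close the gap: at the first contact point the global normal of $\Sigma$ may point away from the sliding sphere, and then (for odd $r$) the $H_r$-comparison with respect to the common normal fails and no contradiction is obtained; forcing the contact to occur on the correct side requires an extra argument about which complementary component of $\Sigma$ can accommodate the sphere, which you do not supply.

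For comparison, the paper's proof is built so that neither difficulty arises. The elliptic point is produced by a purely pointwise second-order comparison with a sweeping family of equidistant spheres (no maximum principle and no attained height extremum is needed, since those sweeps do reach a first, genuinely attained, contact point); after the G\aa{}rding continuation this makes the Fontenele--Silva principle available. Then, in case (1), instead of comparing $\Sigma$ with a horosphere of possibly different $H_r$, the paper runs Alexandrov reflection through the geodesic hyperplanes $h_t(\bar x)$ and compares $\Sigma$ with its own reflected pieces, which have the \emph{same} constant $H_r$, concluding that $x_{n+1}$ is constant; in case (2) it first shows $H_r<1$ by touching with horospheres issued from the side selected by the separates-poles hypothesis, and then compares $\Sigma$ with a foliation by equidistant spheres all having exactly the curvature $H_r$ (homothety images of one leaf), for which the relevant intersections are compact and the extremal leaf genuinely touches $\Sigma$. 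If you want to salvage your approach you would need to prove the attainment statements and the bound $H_r\leq 1$ independently, which is essentially the content the reflection argument is designed to replace.
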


\begin{proof} {\em (1)} Suppose the asymptotic boundary of $\Sigma$ is only one point $q_\infty\in{\mathbb S}^n(\infty)$. First, inspired by \cite{nelli1995some}, we prove that $\Sigma$ has a strictly convex point. We consider the half-space model for $\mathbb{H}^{n+1}$ so that $q_\infty$ corresponds to the infinity point. In this model, the horospheres whose asymptotic boundary is  $q_\infty$ are given by the equations $x_{n+1}=\text{constant}$. We write the coordinates as $(\bar{x}, x_{n+1})$, where $\bar{x}=\{x_1,\cdots,x_n\}$. Then the geodesics orthogonal to the horospheres with $q_{\infty}$ as asymptotic point, are one-to-one correspondence with the points $\bar{x}\in\mathbb{R}^{n}$ and can be written as $\gamma_{\bar{x}}(s)=(\bar{x},s)$ for $s>0$. Each such geodesic $\gamma=\gamma_{\bar{x}}$ determines a family of hyperplanes orthogonal to $\gamma,$  $h_t(\bar{x})=\{x\in\mathbb{R}^{n+1}_+:\|x-(\bar{x},0)\|=t\}$, where $\|\cdot\|$ denotes the standard Euclidean norm.
Let ${\mathcal E}_t$ be a family of  equidistant spheres  such that for any $t$, the asymptotic boundary is $\partial_{\infty}{\mathcal E}_t=\{(\bar x,0)\in{\mathbb R}^{n+1}\ |\ x_1^2+\dots+x_n^2=t^2\}$ and that the mean curvature vector at the highest point, points upward.
Notice that every principal curvature of ${\mathcal E}_t$ is equal to  a constant $0<H_0<1$ at any point. 
For $t$ small, ${\mathcal E}_t\cap \Sigma=\emptyset.$ Then  increase $t$ till the first $\bar t$ such that 
${\mathcal E}_{\bar t}\cap \Sigma$ contains a point $p.$ Notice that, in a neighborhood of $p$, the hypersurface $\Sigma$ lies above 
${\mathcal E}_{\bar t}.$ For any tangent vector $X$ at $p$ (tangent to  $\Sigma$ and ${\mathcal E}_{\bar t}$), consider the 2-plane $P_X$ generated by $X$ and the $x_{n+1}$-axis. In a neighborhood of $p$, $P_X\cap\Sigma$ is a regular curve, that lies above the regular curve$P_X\cap{\mathcal E}_{\bar t}$ that has curvature $H_0.$ Hence the curvature of $P_X\cap\Sigma$ is larger or equal to $H_0$. 
Then, $p$ is a strictly convex point of $\Sigma.$

Since $\partial_\infty\Sigma=\{q_{\infty}\}$,   if $t$ small enough, we have 

\begin{equation*}
h_t\cap\Sigma=\emptyset
\end{equation*}
 For any $t>0$, we denote by $\mathcal{H}^{n+1}_+(t)$ and $\mathcal{H}^{n+1}_-(t)$ the half-spaces determined by $h_t=h_t(\bar x)$. We set 
\begin{equation*}
\Sigma_{\pm}(t)=\Sigma\cap\mathcal{H}^{n+1}(t)
\end{equation*}
and $\Sigma_-(t)=\emptyset$ for  any $\bar x$  and $t$ sufficiently small.

Note also that $\Sigma$ separates $\mathbb{H}^{n+1}$ into two connected components $\Omega_+$ and $\Omega_-$ where $\partial_{\infty}\Omega_+=\{q_{\infty}\}$ and $\partial_\infty\Omega_-\cong\{\mathbb{R}^n\times\{0\}\}$.

Let $t_0$ be the smallest $t$ for which $h_t(\bar x)\cap\Sigma\neq\emptyset$. Then for all $t>t_0$ such that $t-t_0$ sufficiently small, consider the reflected hypersurfaces $\Sigma^{'}_{-}(t)=r_{h_t(\bar x)}(\Sigma_-(t))$, where $r_{h_t}$ is the hyperbolic isometry fixing $h_t(\bar x).$

 The hypersurfaces $\Sigma^{'}_{-}(t)$  have the following properties:
\begin{equation}\label{reflect}
\Sigma_{-}^{'}(t)\subset\Omega_+,
\end{equation}
\begin{equation}
\Sigma_{-}^{'}(t)-\partial\Sigma_{-}^{'}(t)\subset\text{int}(\Omega_+).
\end{equation}

Let us now suppose that condition \eqref{reflect} does not hold for some large $t$. Then there must be some $\bar{t}>t_0$ such that the surfaces $\Sigma_{-}^{'}(\bar{t})$ and $\Sigma_{+}(\bar{t})$
have a point $p$ of common tangency (possibly at the boundary), and that $\Sigma_{-}^{'}(\bar{t})$
lies above $\Sigma_{+}(\bar{t})$ in a neighborhood of $p$. By  the tangency  principle in \cite[Theorem 1.1]{fontenele2001tangency}, we conclude that these hypersurfaces coincide. From this, it easily follows that $\Sigma$ is compact, which is a contradiction. Thus we conclude that \eqref{reflect} holds for all $t$ and all $\bar{x}\in\mathbb{R}^n$.

Relation \eqref{reflect}  yields  that at any of its points, $\Sigma$ is tangent to the (horizontal) horosphere passing through the point. 
If this was not the case, then it is easily seen that there is a geodesic hyperplane $h_t(\bar x)$ (for an appropriate choice of $\bar{x}$ and $t$) such that $\Sigma_{-}^{'}(t)\not\subset\Omega_+$ and this contradicts \eqref{reflect}. It follows that the function $x_{n+1}$ must be constant on $\Sigma$ and $\Sigma$ is a horosphere.

{\em (2)} First we prove that $\Sigma$ has a strictly convex point. By the embeddedness, $\Sigma$ divides the hyperbolic space into two connected components. Also, $\partial_\infty\Sigma$ separates $S^n(\infty)$ into two components: $S^n_+\cup S^n_-=S^n(\infty)-\partial_\infty\Sigma$. Denote by $\nu$ the unit normal orienting $\Sigma$.
We first prove that $\Sigma$ has a strictly convex point, unless $\Sigma$ is a hyperplane.

Let $N_{\varepsilon}(t)$ $t\in[0,1],$  be a family of equidistant spheres with the following properties.
\begin{enumerate}[leftmargin=*]
\item The mean curvature vector of $N_{\varepsilon}(t)$ points upward for any $t\in[0,1].$
\item The angle between $N_{\varepsilon}(0)$ and $\{x_{n+1}=0\},$ is $\frac{\pi}{2}+\varepsilon$ 
with  $\varepsilon>0$ small and  $\Sigma$ is contained in the mean-convex side of $N_{\varepsilon}(0).$ 
\item $N_{\varepsilon}(t)$ is obtained from $N_{\varepsilon}(0)$ by a homothety from the euclidean center of $N_{\varepsilon}(0).$ 
By construction, the angle $\theta_{\varepsilon}(t)$ between $N_{\varepsilon}(t)$ and $\{x_{n+1}=0\}$  satisfies  
$\theta_{\varepsilon}(t)>\frac{\pi}{2}$ hence  the mean curvature vector of $N_{\varepsilon}(t)$ points towards $\Sigma.$
\item $\partial_{\infty} N_{\varepsilon}(1)=\partial_{\infty}\Sigma.$
\end{enumerate}

Increasing $t$, there exists a  first $\bar t<1$  such that $N_{\varepsilon}(\bar t)$  and $\Sigma$ has a  contact point $p$, then $p$ is a strictly convex point. If such point does not exists, then $\Sigma$ lies above $N_{\varepsilon}(1).$ Notice that, letting 
$\varepsilon\longrightarrow 0,$ $N_{\varepsilon}(1)$ tends to the hyperplane whose asymptotic boundary coincides with $\partial\Sigma.$
Hence $\Sigma$ lies above such hyperplane.

Let $S_{\varepsilon}(t)$ $t\in[0,1],$  be a family of equidistant spheres with the following properties.
\begin{enumerate}[leftmargin=*]
\item The mean curvature vector of $S_{\varepsilon}(t)$, at the highest point, points downward for any $t\in[0,1].$
\item The angle between $\partial_{\infty}S_{\varepsilon}(0)$ and $\{x_{n+1}=0\},$ is $\frac{\pi}{2}-\varepsilon$ 
with  $\varepsilon>0$ small and  $\Sigma$ is contained in the mean-convex side of $S_{\varepsilon}(0).$ 
\item $S_{\varepsilon}(t)$ is obtained from $S_{\varepsilon}(0)$ by a homothety from the euclidean center of $S_{\varepsilon}(0).$ 
By construction, the angle $\theta_{\varepsilon}(t)$ between $S_{\varepsilon}(t)$ and $\{x_{n+1}=0\}$  satisfies  
$\theta_{\varepsilon}(t)<\frac{\pi}{2}$ hence  the mean curvature vector of $S_{\varepsilon}(t)$ points towards $\Sigma$.
\item $\partial_{\infty} S_{\varepsilon}(1)=\partial\Sigma.$
\end{enumerate}

Increasing $t$, there exists a  first $\bar t<1$  such that $S_{\varepsilon}(\bar t)$  and $\Sigma$ has a  contact point $p$, then $p$ is a strictly convex point. If such point does not exists, then $\Sigma$ lies below $S_{\varepsilon}(1).$ Notice that, letting 
$\varepsilon\longrightarrow 0,$ $S_{\varepsilon}(1)$ tends to the hyperplane whose asymptotic boundary coincides with $\partial\Sigma.$
Hence $\Sigma$ lies below such hyperplane.

We conclude that either there is a strictly convex point on $\Sigma$ or $\Sigma$ is a hyperplane.

Hence we may  assume that there is a strictly convex point. By the argument after  \eqref{garding ineq}, $H_1$ is positive on $\Sigma$.
Moreover, since  $\Sigma$ separates poles,  we can  select $S^n_+$  as asymptotic boundary of  the region into which the mean curvature vector of $\Sigma$ points. 

Now let us use the half-space model.  We take the center of $S^-_n$ to be the origin of the half-space model and then $S^n_+$ is the component which is unbounded in the Euclidean topology.

 First we prove that  the $r$-mean curvature of $\Sigma$ satisfies $H_r< 1.$
Fix a point $x\in S^n_+$ and consider the family of horospheres having $x$ as asymptotic point. There is a horopshere first touches 
$\Sigma.$ At this contact point, the horosphere and $\Sigma$  are  tangent. Moreover, with respect to that normal vector, $\Sigma$ is below the horosphere. Therefore $r$-mean curvature $H_r$ of $\Sigma$ is strictly less than $1$ by the tangency principle in \cite{fontenele2001tangency}. 

Let ${\mathcal E}$ be  the equidistant sphere of $r$-mean curvature $H_r,$ such that 
$\partial_{\infty}{\mathcal E}=\partial_\infty\Sigma$ and the mean curvature vector of ${\mathcal E}$ also points to $S^n_+$. By applying to  ${\mathcal E}$ the isometries of  ${\mathbb H}^{n+1}$ given by homotheties  with respect to the center of $S^n_-$, we get a foliation of $\mathbb{H}^{n+1}$ consisting of equidistant spheres,  denoted by ${\mathcal E}_t,$ $t\in {\mathbb R}.$ Choose the parameter $t$ such that 
${\mathcal E}_0={\mathcal E}$ and  ${\mathcal E}_t$ goes to the origin as $t\to-\infty$ (to the infinity point as $t\to+\infty$). Since $\partial_{\infty}{\mathcal E}=\partial_\infty\Sigma$, we have that $\Sigma\cap {\mathcal E}_t$ is compact for all $t\neq0$ and $\Sigma\cap {\mathcal E}_t=\emptyset$ for all $|t|$ sufficiently large. If $\Sigma\neq {\mathcal E}$, then $\Sigma\cap {\mathcal E}_{\bar t}\neq\emptyset$ for some $\bar{t}\neq0$. Suppose $\bar{t}>0$, let $t_1=\sup\{t: \Sigma\cap {\mathcal E}_{\bar t}\neq\emptyset\}$. Then $\Sigma$ is below ${\mathcal E}_{t_1}$ with respect to $\nu$ near the contact point. Thus, by the tangency principle in \cite{fontenele2001tangency}, $\Sigma={\mathcal E}_{t_1}$ which contradicts $\partial_{\infty}{\mathcal E}=\partial_\infty\Sigma$. The case of $\bar{t}<0$ is similar. Hence, $\Sigma={\mathcal E}$. 

\end{proof}

\section{Bernstein theorem for immersed hypersurface}

In this section, we consider the Bernstein theorem for immersed hypersurfaces, either with constant  $r$-mean curvature, or satisfying a general elliptic Weingarten  equation. 

\subsection{The case  of constant $r$-mean curvature hypersurface contained in a slab}

In this section, we consider the Bernstein theorem for immersed hypersurfaces with constant $r$-mean curvature contained in a slab. 
The starting point is the following non-existence theorem.

\begin{thm}\label{nonexistence}
In any slab of hyperbolic space $\mathbb{H}^{n+1}$, there is no complete properly immersed hypersurface $\Sigma$ with $r$-mean curvature satisfying $\Bar{H}:=\sup_{\Sigma}|H_r|<1$ for any $r\geq1$.
\end{thm}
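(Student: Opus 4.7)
My plan is to argue by contradiction using the tangency principle, with horospheres---whose principal curvatures are all equal to $+1$ with respect to the inward (horoball-pointing) unit normal, so that $H_r\equiv 1$---serving as comparison hypersurfaces. Work in the half-space model of $\mathbb{H}^{n+1}$, where after a hyperbolic isometry one may assume the slab is $\{a\leq x_{n+1}\leq b\}$ with the two bounding horospheres sharing the asymptotic point at infinity. Write $h=x_{n+1}|_\Sigma$, so that $a\leq h\leq b$ on $\Sigma$, and note that every leaf $\{x_{n+1}=c\}$ of the ambient foliation is a horosphere with upward inward normal pointing into the horoball $\{x_{n+1}>c\}$.

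The central step is as follows. Let $h_*:=\inf_\Sigma h$, and first suppose it is realised at some $q_*\in\Sigma$. In a neighbourhood of $q_*$ the hypersurface $\Sigma$ then lies inside the closed horoball $\{x_{n+1}\geq h_*\}$ and is tangent to its boundary horosphere $\{x_{n+1}=h_*\}$ at $q_*$, sharing the upward unit normal $N$. The tangency principle from \cite[Theorem 1.1]{fontenele2001tangency}, applied with $\Sigma$ on the $+N$-side of the (admissible) horosphere, forces
\[
\kappa_i(\Sigma)(q_*)\;\geq\;\kappa_i(\mathrm{horo})\;=\;1,\qquad i=1,\ldots,n,
\]
so $\sigma_r(\kappa(q_*))\geq\binom{n}{r}$ and hence $H_r(q_*)\geq 1$. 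A strict inequality already contradicts $\sup_\Sigma|H_r|\leq\bar H<1$; the equality case upgrades, via the same tangency principle, to $\Sigma$ coinciding with the horosphere in a neighbourhood of $q_*$, where $|H_r|\equiv 1$, again contradicting the hypothesis.

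The main obstacle is that $\Sigma$ is only properly immersed, not compact, so $h_*$ need not be attained: a minimising sequence $\{p_k\}\subset\Sigma$ could a priori escape to horizontal infinity in the half-space coordinates with $|\bar x(p_k)|\to\infty$ while $x_{n+1}(p_k)\to h_*$. To rule this out, I would exploit an auxiliary horosphere foliation based at a finite asymptotic point $p_0\in\{x_{n+1}=0\}$: the function
\[
\rho(q)\;=\;\frac{|\bar x(q)-p_0|^2+x_{n+1}(q)^2}{2\,x_{n+1}(q)},
\]
which assigns to $q$ the radius of the horosphere through $q$ asymptotic to $p_0$, is proper on $\Sigma$, since each sublevel set $\{\rho\leq R\}\cap\Sigma$ is contained in a bounded Euclidean ball kept away from $\{x_{n+1}=0\}$ by the slab confinement, and proper immersion in $\mathbb{H}^{n+1}$ transfers this to compactness inside $\Sigma$. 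Combined with the completeness of $\Sigma$, this properness should suffice to conclude that any $h$-minimising sequence sub-converges in $\Sigma$ to an actual minimiser, reducing to the case handled above. I expect the most delicate work to lie precisely in this compactness-to-attainment reduction, which ties together properness, completeness, and the slab constraint.
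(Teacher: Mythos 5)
There is a genuine gap, and it sits exactly where you yourself flag the delicate point: the attainment of $h_*=\inf_\Sigma x_{n+1}$. Your properness argument does not deliver it. The function $\rho$ is indeed proper on $\Sigma$ (its sublevel sets meet the slab in Euclidean-compact regions, and proper immersion turns these into compact subsets of $\Sigma$), but properness only controls sequences that remain in some fixed sublevel set; a height-minimising sequence $\{p_k\}$ with $|\bar x(p_k)|\to\infty$ has $\rho(p_k)\to\infty$ and escapes every sublevel set, so no subconvergence follows. Completeness does not help either: there is no general principle that a bounded-below function on a complete, properly immersed, non-compact hypersurface attains its infimum, and replacing attainment by an Omori--Yau type argument at infinity would require curvature bounds (Ricci bounded below, or bounded second fundamental form) that are not among the hypotheses --- the assumption $\sup_\Sigma|H_r|<1$ gives no such bound. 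Consequently the tangency comparison with the bottom horosphere has no point at which to be applied, and the proof as written does not close.

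The paper's proof avoids precisely this obstruction by changing the comparison family: instead of horizontal horospheres (whose first contact with $\Sigma$ may occur ``at infinity''), it sweeps with a one-parameter family of equidistant spheres ${\mathcal E}(t)$, $t\in[0,\infty)$, all of whose principal curvatures equal $\bar H^{1/r}<1$, so that $H_r({\mathcal E}(t))=\bar H\geq H_r(\Sigma)$ everywhere. In the half-space model each ${\mathcal E}(t)$ is a Euclidean spherical cap, and its portion inside the slab is compact; combined with the properness of the immersion, this guarantees that a genuine first touching point $p\in{\mathcal E}(t_0)\cap\Sigma$ exists, with $\Sigma$ locally above ${\mathcal E}(t_0)$ with respect to the upward normal. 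The tangency principle of \cite{fontenele2001tangency} then forces $\Sigma={\mathcal E}(t_0)$, which is absurd because no equidistant sphere is contained in a slab. If you wish to keep the horosphere comparison, you must supply exactly this sort of compact sweeping (or add a hypothesis enabling a maximum principle at infinity); as it stands, the assertion that an $h$-minimising sequence sub-converges to a minimiser is unjustified and, in this generality, false.
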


\begin{proof}
In the upper half-space model of hyperbolic space $\mathbb{H}^{n+1}$,  we assume that the slab is between two horospheres given by two horizontal Euclidean hyperplanes. We can foliate the whole space by a family of  equidistant spheres ${\mathcal E}(t)$ with mean curvature being $\bar{H}^{\frac1r}$ with respect to the  normal vector field that points upward at the highest point,   for $t\in[0,\infty)$. When $t$ is small, ${\mathcal E}(t)$ and $\Sigma$ are disjoint. Then,  consider  $t_0$ such that  ${\mathcal E}(t_0)$ and  the hypersurface $\Sigma$ first touch  at some point $p$. In a neighborhood of $p$, $\Sigma$ is above ${\mathcal E}(t_0)$ with respect  the upward normal vector of ${\mathcal E}(t_0)$ at $p$. We also have $H_r({\mathcal E}(t_0))\geq H_r(\Sigma)$ in that neighborhood and ${\mathcal E}(t_0)$ is $r$-admissible. Therefore, by the tangency principle \cite[Theorem 1.1]{fontenele2001tangency}, $\Sigma={\mathcal E}(t_0)$, which is a contradiction, because ${\mathcal E}(t_0)$ is not contained in any slab.
\end{proof}

As a consequence of the previous Theorem, we are able to  prove the analogous of   \cite[Theorem 1]{alias2006uniqueness} 
for surfaces of  constant Gaussian curvature. 

\begin{cor}
If $\Sigma$ is a properly immersed complete surface  in ${\mathbb H}^3$ with constant $2$-mean curvature $0<H_2\leq1$ contained in a slab then $\Sigma$ is a horosphere.
\end{cor}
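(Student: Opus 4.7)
The plan is to split on the value of the constant $H_2$. If $0<H_2<1$, then $\sup_\Sigma|H_2| = H_2<1$ and Theorem \ref{nonexistence} applied with $r=2$ immediately rules out the existence of such a properly immersed $\Sigma$ inside a slab, a contradiction. We may therefore assume $H_2=1$ and aim to prove $\Sigma$ is a horosphere.

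Work in the half-space model with the slab realised as $\{a\le x_3\le b\}$, foliated by the horospheres $H_t=\{x_3=t\}$; with the downward-pointing orientation, each $H_t$ is totally umbilic with principal curvatures equal to $1$, so in particular $H_2(H_t)=1$. Since $\kappa_1\kappa_2=1$ never vanishes on the connected $\Sigma$, continuity allows us to orient $\Sigma$ globally so that $\kappa_1,\kappa_2>0$; hence $\Sigma$ is $2$-admissible and, by the G\aa rding inequality \eqref{garding-ineq2}, $H_1\ge1$. Let $t^*=\sup_\Sigma x_3$. The key is to find a tangent contact between $\Sigma$ and $H_{t^*}$: at such a point the comparison of second fundamental forms with the downward unit normal gives $\kappa_i(\Sigma)\ge 1$, which together with $\kappa_1\kappa_2=1$ forces $\kappa_1=\kappa_2=1$ there; the tangency principle of \cite{fontenele2001tangency} applied to the equation $H_2=1$ then yields $\Sigma=H_{t^*}$ in a neighbourhood of the tangent point, and unique continuation for the elliptic $H_2$ operator (together with connectedness of $\Sigma$) propagates this identification to all of $\Sigma$.

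The main obstacle is producing the tangent contact when $t^*$ is not attained on $\Sigma$. By the Gauss equation in $\mathbb{H}^3$ the intrinsic Gauss curvature of $\Sigma$ equals $K=H_2-1=0$, so $\Sigma$ is a complete flat surface and hence classically parabolic for the Laplacian; combined with the positivity of $P_1$ (equivalent to $2$-admissibility), this should yield $L_1$-parabolicity, allowing an appeal to Theorem \ref{theorem-parabolic} once the sign of the angle function is checked. Alternatively, one can argue by compactness: take $p_n\in\Sigma$ with $x_3(p_n)\to t^*$, translate horizontally by ambient isometries (which preserve the horosphere foliation and the constant $H_2$) to bring the $p_n$ to a common vertical axis, and extract a smooth sub-sequential limit $\Sigma_\infty$ using interior curvature estimates for $H_2=1$ surfaces; the limit is a properly immersed complete surface with $H_2=1$ still contained in the slab and attaining its height supremum at $t^*$, which reduces to the easy sub-case and, via a standard propagation back to $\Sigma$, forces $\Sigma$ itself to be a horosphere.
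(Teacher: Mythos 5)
Your first step coincides with the paper's: Theorem \ref{nonexistence} with $r=2$ rules out $\sup_\Sigma|H_2|<1$, so $H_2=1$, and by the Gauss equation $\Sigma$ is a complete flat immersed surface. From that point on, however, your argument has genuine gaps. The tangency argument at the top horosphere $H_{t^*}$ only works if the height supremum is actually attained on $\Sigma$, and for a noncompact properly immersed surface in a slab it need not be; you recognize this, but neither of your two repairs closes the hole. For the parabolicity route, flatness does give classical ($\Delta$-)parabolicity, but $L_1$-parabolicity in the sense of Definition \ref{def-parabolic} does not follow merely from $\Delta$-parabolicity plus positivity of $P_1$: the eigenvalues of $P_1$ are $\kappa_2$ and $\kappa_1$, and the constraint $\kappa_1\kappa_2=1$ allows $\kappa_1\to\infty$, $\kappa_2\to 0$, so $P_1$ need not be uniformly elliptic or have bounded trace, and no comparison criterion is available as stated. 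Moreover, even granting $L_1$-parabolicity, Theorem \ref{theorem-parabolic} \emph{assumes} that the angle function does not change sign; you defer this check, and it is not a consequence of $\kappa_1,\kappa_2>0$ or of the slab condition, so the appeal to that theorem is not justified. For the compactness route, the ``interior curvature estimates for $H_2=1$ surfaces'' are not established (and cannot hold uniformly, again because $\kappa_1$ may blow up while $\kappa_1\kappa_2=1$), and the final step is logically invalid: even if a subsequential limit of translated copies of $\Sigma$ is a horosphere, this only says $\Sigma$ is asymptotic to a horosphere in the translated frames; there is no ``standard propagation'' forcing $\Sigma$ itself to coincide with one, absent an actual interior contact point to which the tangency principle of \cite{fontenele2001tangency} could be applied.

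The missing ingredient is precisely what the paper invokes at this stage: once $H_2=1$ forces $\Sigma$ to be a complete flat immersion in $\mathbb{H}^3$, one appeals to the classification of such surfaces (\cite[Theorem 5]{galvez2009surfaces}); the slab hypothesis then excludes everything except the horosphere. If you want a self-contained argument avoiding that classification, you would need either a proof that the relevant height extremum is attained (or an Omori--Yau/maximum-principle-at-infinity substitute valid for the operator $L_1$ with the degenerating ellipticity above), or an independent verification of the sign condition on the angle function together with a genuine proof of $L_1$-parabolicity; as written, none of these is supplied.
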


\begin{proof}
 By Theorem \ref{nonexistence}, we have $H_2=1$, which implies that $\Sigma$ is a complete flat immersion. Then the result follows from  \cite[Theorem 5]{galvez2009surfaces}.
\end{proof}

 In higher dimension, we need to add  $L_{k}$-parabolicity (see Definition \ref{def-parabolic}) and a geometric assumption,  in order to get a Bernstein type Theorem.

\begin{thm}\label{theorem-parabolic}
Let $\Sigma$ be a complete, $r$-admissible, $L_{r-1}$-parabolic properly immersed hypersurface with constant $r$-mean curvature. If  $\Sigma$  is contained in a slab  and  the angle function does not change sign, then $\Sigma$ is a horosphere.
\end{thm}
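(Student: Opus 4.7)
My plan is to reduce the problem to an application of the $L_{r-1}$-parabolicity assumption. Work in the warped product model $\mathbb{H}^{n+1} = \mathbb{R} \times_{e^t} \mathbb{R}^n$; a slab is then a region $\{t_1 \leq t \leq t_2\}$, and the height $h := \pi_1 \circ f$ is a bounded function on $\Sigma$. The key calculation starts from the standard warped-product Hessian formula
\[
\operatorname{Hess}_\Sigma h(X,Y) = \frac{\phi'(h)}{\phi(h)}\bigl(\langle X,Y\rangle - \langle \nabla h, X\rangle\langle \nabla h,Y\rangle\bigr) - \Theta\,\langle AX,Y\rangle,
\]
taken with $\phi(t)=e^t$, so $\phi'/\phi \equiv 1$. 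Tracing against $P_{r-1}$ and using $\operatorname{tr}(P_{r-1}) = r\binom{n}{r}H_{r-1}$ and $\operatorname{tr}(AP_{r-1}) = r\binom{n}{r}H_r$ yields
\[
L_{r-1} h = r\binom{n}{r}\bigl(H_{r-1} - H_r\Theta\bigr) - \langle P_{r-1}\nabla h,\nabla h\rangle.
\]
Applying the chain rule to $e^h$, the gradient terms cancel (because $(e^h)'' = (e^h)' = e^h$), producing the clean identity
\[
L_{r-1}(e^h) = r\binom{n}{r}\,e^h\,\bigl(H_{r-1} - H_r\Theta\bigr).
\]

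By $r$-admissibility and G\aa rding's inequality \eqref{garding-ineq2}, $H_{r-1} \geq H_r^{(r-1)/r} > 0$ on $\Sigma$. Assume the orientation has been chosen so that $\Theta \leq 0$ on $\Sigma$; this is the natural case, since a horosphere carries its $r$-admissible orientation $-T$ and thereby has $\Theta \equiv -1$. Then $H_{r-1} - H_r\Theta \geq H_{r-1} > 0$, whence $L_{r-1}(e^h) > 0$. Since $e^h$ is bounded above on $\Sigma$ by the slab assumption, the $L_{r-1}$-parabolicity forces $e^h$ to be constant, hence $h$ is constant, and $\Sigma$ coincides with a slice $\{h = h_0\}$, i.e.\ a horosphere.

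The main obstacle is ruling out the opposite sign $\Theta \geq 0$, in which case $H_{r-1} - H_r\Theta$ need no longer be nonnegative (Theorem \ref{nonexistence} only gives $H_r \geq 1$, not $H_r = 1$). My approach is to exclude this case geometrically. First, $\Theta \not\equiv 0$: otherwise $T$ would be tangent to $\Sigma$ everywhere, and since $|T| \equiv 1$ the flow of $T|_\Sigma$ on the complete manifold $\Sigma$ produces unit-speed curves which in the ambient space are vertical lines, so they would escape the slab and violate properness. Then with $\Theta \geq 0$ not identically zero, I would slide a horosphere of the slab-foliation into first tangential contact with $\Sigma$ at a point $p$ where $\nabla h(p)=0$ and so $\Theta(p) = \pm 1$; matching the $r$-admissible orientation of $\Sigma$ against the mean-curvature orientation $-T$ of the horosphere, the tangency principle \cite[Theorem 1.1]{fontenele2001tangency} forces $\Theta(p) = -1$, contradicting $\Theta \geq 0$. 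This closes the argument.
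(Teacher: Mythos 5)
Your argument breaks down at its very first displayed identity, and the error is fatal rather than cosmetic. With the conventions of the paper (shape operator $A=-\bar\nabla\nu$, $\Theta=\langle\nu,T\rangle$, so that the slices oriented by $-T$ are $r$-admissible with $\kappa_i=1$ and $\Theta=-1$), the warped-product computation gives
\begin{equation*}
L_{r-1}(e^h)=r\binom{n}{r}\,e^h\bigl(H_{r-1}+\Theta H_r\bigr),
\end{equation*}
not $H_{r-1}-\Theta H_r$: your formula fails on the horosphere itself, where the left-hand side vanishes while your right-hand side equals $2r\binom{n}{r}e^h$. With the correct sign, in the case $\Theta\le 0$ (the case that actually contains the admissible horospheres) the factor $H_{r-1}+\Theta H_r$ has no sign, because Theorem \ref{nonexistence} only yields $H_r\ge 1$, and for $H_r>1$ G\aa rding's inequality $H_{r-1}\ge H_r^{(r-1)/r}$ does not dominate $|\Theta|H_r$. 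So parabolicity cannot be applied to $e^h$ alone. Your write-up is also internally inconsistent: you derive the \emph{strict} inequality $L_{r-1}(e^h)>0$ and then conclude that $e^h$ is constant, but a constant satisfies $L_{r-1}(e^h)=0$; at best such an argument proves non-existence in that case, not that $\Sigma$ is a slice, and under the sign convention in which your Hessian formula is valid the case you try to ``exclude geometrically'' is exactly the one where horospheres live. The exclusion argument itself is also not sound: $\Sigma$ is noncompact, so $\sup_\Sigma h$ need not be attained, and properness inside a slab does not produce a ``first tangential contact'' with a horosphere of the foliation.

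For comparison, the paper circumvents precisely this sign problem by applying $L_{r-1}$-parabolicity not to $e^h$ but to $\phi=e^hH_r^{1/r}+e^h\Theta$: by the computation in the proof of Theorem 32 of \cite{alias2013hypersurfaces}, together with \eqref{garding-ineq2} and the assumption that $\Theta$ does not change sign, the problematic terms combine into nonnegative differences and one gets $L_{r-1}\phi\ge 0$. Parabolicity then makes $\phi$ constant; the identity (3.8) of \cite{alias2007constant} for $\Delta\phi$ forces $\Sigma$ to be totally umbilical, Theorem \ref{nonexistence} shows the (constant) principal curvatures are at least $1$, and \cite{currier1989hypersurfaces} leaves only horospheres and geodesic spheres, the latter being excluded because its angle function changes sign. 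So even after correcting your sign, the simple function $e^h$ does not suffice, and the conclusion ``horosphere'' requires these additional steps; your proposal as it stands has a genuine gap.
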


\begin{proof} 

  Define $\phi=e^hH_r^{1/r}+e^h\Theta$ where $\Theta$ is the angle function and $h$ is the height, as we defined in Section 2.1. It follows from the proof of Theorem 32 in \cite{alias2013hypersurfaces} that 
\begin{equation}
\begin{split}
L_{r-1}\phi\geq&c_{k-1}e^hH_r^{1/r}(H_{r-1}-H_r^{\frac{r-1}{r}})\\
&-\binom{n}{k}e^h\Theta(nH_1H_r-(n-r)H_{r+1}-rH_r^{\frac{r+1}{r}})\geq0,
\end{split}
\end{equation}
where we have used the Garding inequality \eqref{garding-ineq2} in the last inequality. Since $\Sigma$ is $L_{r-1}$-parabolic, $\phi$ is a constant. In particular, $\Delta\phi=0.$ Then,   equation (3.8) in \cite{alias2007constant} gives that
\begin{equation*}
0=\Delta \phi=e^h\Theta(||A||^2-(n-1)H^2)
\end{equation*}

Notice that, in the notation of  (3.8) in \cite{alias2007constant},  $\rho(t)=e^t,$ ${\mathcal H}(t)=1$ and $Ric_{\mathbb P} (\hat N)=0$ as ${\mathbb P}={\mathbb R}^n.$

 We conclude that $\|A\|^2=(n-1)H^2,$ that yields $\Sigma$ is a totally umbilical hypersurface.  Thus, all the principal curvatures are equal to a constant. Moreover, since the hypersurface is contained in a slab, then all the principal curvatures are  larger  or equal  than $1$ by Theorem \ref{nonexistence}. Then it is either a horopshere or sphere by  \cite[Theorems A,B]{currier1989hypersurfaces}. However, in the latter case, the angle function changes sign. Therefore, it has to be a horosphere.

\end{proof}

\subsection{The case of admissible Weingarten hypersurfaces} 
In this section, we get a Bernstein type theorem for admissible Weingarten  hypersurfaces.
 Note  that uniformly weakly horospherically convex hypersurfaces  with injective hyperbolic Gauss map become embedded under the (past) normal geodesic flow (see Theorem 1.3 in \cite{bonini2017weakly}).

\begin{thm}\label{weakly horospherically convex}
Suppose that $ \Sigma$ is an immersed, complete, uniformly  weakly horospherically convex  admissible Weingarten hypersurface in $\mathbb{H}^{n+1}$.  Then $\Sigma$  is a horosphere provided its asymptotic boundary is a single point.
\end{thm}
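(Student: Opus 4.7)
The strategy is to reduce to an analog of Theorem~\ref{DL r-mean}(1) by flowing $\Sigma$ forward under the past normal geodesic flow until it becomes properly embedded, then applying an Alexandrov reflection argument.

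First, I would analyze the effect of the flow $\{f^t\}$ defined in~\eqref{Eq:NormalFlow}. By~\eqref{kappat}, the principal curvatures transform by $\kappa \mapsto (\kappa + \tanh t)/(1 + \kappa \tanh t)$, a M\"obius transformation of $(-1,\infty)$ that is strictly increasing in $t$ and converges to $1$ as $t \to \infty$. Uniform weak horospherical convexity $\kappa_i \geq c_0 > -1$ thus improves to a positive uniform lower bound $\kappa_i^t \geq (c_0 + \tanh t)/(1 + c_0 \tanh t)$ approaching $1$, so $\Sigma^t$ is uniformly strictly convex for $t$ large. Because the transformation is monotone in each principal curvature, it preserves both the admissible cone $\Gamma^* \cap \mathcal{K}$ and the ellipticity of the Weingarten equation; hence $\Sigma^t$ satisfies a transformed admissible elliptic Weingarten equation. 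Moreover, the flow moves each point a fixed hyperbolic distance, so $\Sigma^t$ is still properly immersed and $\partial_\infty \Sigma^t = \partial_\infty \Sigma = \{p\}$.

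Next, I would invoke Theorem~1.3 of~\cite{bonini2017weakly} (alluded to in the remark immediately preceding the statement) to conclude that $\Sigma^t$ is properly embedded for $t$ sufficiently large. With $\Sigma^t$ now a complete, properly embedded, admissible Weingarten hypersurface whose asymptotic boundary is a single point, I would repeat the Do Carmo--Lawson Alexandrov reflection from the proof of Theorem~\ref{DL r-mean}(1): work in the half-space model with $p$ at infinity, slide geodesic hyperplanes orthogonal to each vertical geodesic, reflect the lower portion of $\Sigma^t$ across them, and apply the tangency principle~\cite[Theorem~1.1]{fontenele2001tangency}, which is valid for general elliptic admissible Weingarten hypersurfaces. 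This forces a horizontal horosphere to be tangent to $\Sigma^t$ at every point, so $\Sigma^t$ is a horosphere. Horospheres are invariant under the normal geodesic flow (a horosphere centered at $p$ flows to another horosphere centered at $p$), so reversing the flow yields that $\Sigma = f^{-t}(\Sigma^t)$ is itself a horosphere.

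The main obstacle is verifying the embeddedness of $\Sigma^t$: Theorem~1.3 of~\cite{bonini2017weakly} requires the hyperbolic Gauss map to be injective, while for a horosphere the Gauss map is constant. One must therefore either argue that strict convexity after the flow together with the single asymptotic limit excludes any non-injective configuration other than a horosphere, or handle separately the degenerate case in which the Gauss map image reduces to a single point, where $\Sigma^t$ is already contained in a horosphere by the standard characterization of horospheres via constancy of the hyperbolic Gauss map.
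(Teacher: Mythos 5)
Your overall strategy (flow forward by \eqref{Eq:NormalFlow}, observe via \eqref{kappat} that the curvatures improve and that the Weingarten equation transforms into another admissible elliptic one, reduce to the embedded case, then flow back) coincides with the paper's proof up to the embeddedness step. But precisely there your argument has a genuine gap, which you yourself flag without closing. Theorem~1.3 of \cite{bonini2017weakly} requires a globally \emph{injective} hyperbolic Gauss map, which is not among your hypotheses: weak horospherical convexity only makes $G$ a local diffeomorphism, and strict convexity of $\Sigma^t$ after the flow does not force embeddedness or injectivity of $G$ (locally convex complete immersions can a priori have overlapping sheets; ruling this out is exactly the hard content of the embedding theorems in \cite{bonini2017weakly}). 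Your proposed dichotomy is also not the right one: the alternative to an injective Gauss map is not a \emph{constant} Gauss map, so ``handle separately the case where the Gauss map image is a point'' does not exhaust the remaining configurations. The paper avoids this entirely by invoking Theorem~4.2 of \cite{bonini2017weakly}, which needs only the local injectivity of $G$ (automatic here) together with the single point at infinity to produce, for $t$ large, a properly embedded, uniformly weakly horospherically convex $\Sigma_t$ with one-point asymptotic boundary. Without that (or an equivalent substitute), your reduction to the embedded case is unjustified.

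A second, smaller issue concerns the endgame. You propose to rerun the Do Carmo--Lawson reflection on $\Sigma^t$ using the tangency principle \cite[Theorem~1.1]{fontenele2001tangency}, asserting it is ``valid for general elliptic admissible Weingarten hypersurfaces''; as cited, that principle is formulated for the $r$-mean curvatures, so for a general $\mathcal{W}$ you would have to supply a maximum principle for the fully nonlinear Weingarten equation (this is where condition \eqref{connected} on $\Gamma^*\cap\mathcal{K}$ would enter). The paper instead verifies explicitly that the transformed pair $(\mathcal{W}^t,\Gamma^*_t)$ still satisfies the structural conditions \eqref{positive}--\eqref{elliptic} and then quotes the uniqueness theorem \cite[Theorem~4.4]{bonini2015hypersurfaces} for properly embedded admissible Weingarten hypersurfaces with one-point asymptotic boundary, so no new reflection argument is needed. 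Your final step (flowing back, since a horosphere is a time-slice of the flow of a horosphere) agrees with the paper.
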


\begin{proof}
As the Gauss map of $\Sigma$  is locally injective, we can apply   Theorem 4.2 in \cite{bonini2017weakly}. Then,  for $t$ large enough,   the past normal geodesic flow defined in \eqref{Eq:NormalFlow},  deforms $\Sigma$ into a properly embedded, uniformly weakly horospherically convex hypersurface $\Sigma_t$ with single point boundary at infinity. Moreover,   the principal curvatures of $\Sigma_t$ are given by (see  \eqref{kappat}):

\begin{equation}
\kappa_i^t=\frac{\kappa_i + \tanh(t)}{1+\kappa_i \tanh(t)} \ \text{and} \ \kappa_i=\frac{\kappa_i^t-\tanh(t)}{1-\kappa_i^t\tanh(t)}
\end{equation}
 Let 
 \begin{equation}
 \mathcal{W}^t(x_1,\cdots,x_n):=\mathcal{W}\bigg(\frac{x_1-\tanh(t)}{1-x_1\tanh(t)},\cdots,\frac{x_n-\tanh(t)}{1-x_n\tanh(t)}\bigg).
 \end{equation}
 Then  it follows from the definition of $\mathcal{W}$ that $\mathcal{W}^t$ is a symmetric function of $n$-variables with 
 \begin{equation}
 \mathcal{W}^t\left(\frac{\kappa_0 + \tanh(t)}{1+\kappa_0 \tanh(t)},\dots,\frac{\kappa_0 + \tanh(t)}{1+\kappa_0 \tanh(t)}\right)= \mathcal{W}(\kappa_0,\dots,\kappa_0)=0
 \end{equation}
 and $\frac{\kappa_0 + \tanh(t)}{1+\kappa_0 \tanh(t)}>-1$.
 
 Define
 \begin{equation}
 \mathcal{T}(x_1,\cdots,x_n)=\bigg(\frac{x_1+\tanh(t)}{1+x_1\tanh(t)},\cdots,\frac{x_n+\tanh(t)}{1+x_n\tanh(t)}\bigg).
\end{equation}
We then have
\begin{equation}
\Gamma_t^*\cap\mathcal{K}=\mathcal{T}(\Gamma^*\cap\mathcal{K})
\end{equation}
and 
\begin{equation}
\mathcal{T}((x_1,\cdots,x_n)+\Gamma_n)=\mathcal{T}(x_1,\cdots,x_n)+\Gamma_n.
\end{equation}
For ellipticity, one can easily compute
\begin{equation}
\frac{\partial \mathcal{W}}{\partial x_i}=\frac{1-\tanh^2(t)}{(1-x_i\tanh(t))^2}\frac{\partial\mathcal{W}}{\partial y_i}.
\end{equation}
Therefore, $(\mathcal{W}_t,\Gamma_t^*)$ satisfies \eqref{positive}-\eqref{elliptic}. Thus,  \cite[Theorem 4.4]{bonini2015hypersurfaces}
yields that $\Sigma_t$ is a horosphere  and so is $\Sigma$, since $\Sigma$ is a time-slice of the foliation formed by a horosphere under the normal geodesic flow.

\end{proof}

\begin{cor}\label{cor-weingarten}
Suppose that $ \Sigma$ is an immersed, complete, uniformly weakly horospherically convex, $r$-admissible $H_r$-hypersurface in $\mathbb{H}^{n+1}$.  Then $\Sigma$  is a horosphere provided its asymptotic boundary is a single point.
\end{cor}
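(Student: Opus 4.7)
The plan is to deduce Corollary \ref{cor-weingarten} directly from Theorem \ref{weakly horospherically convex} by recognizing every $r$-admissible $H_r$-hypersurface (with constant $H_r$) as a special instance of an admissible Weingarten hypersurface in the sense of Section 2.5. This identification is already anticipated in the remark preceding the definition of admissibility, so the work is essentially bookkeeping.

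Concretely, I would take the Weingarten function to be $\mathcal{W}(x_1,\dots,x_n) := \sigma_r(x_1,\dots,x_n)$ and choose $\kappa_0 = 0$, so that $\mathcal{W}(\kappa_0,\dots,\kappa_0) = 0$ with $\kappa_0 > -1$. Declare $\Gamma^* := \Gamma_r$, the connected component of $\{\sigma_r > 0\}$ containing $(1,\dots,1)$, and set $K := \binom{n}{r} H_r$, which is a positive constant along $\Sigma$ (positivity follows because $r$-admissibility forces $(\kappa_1,\dots,\kappa_n) \in \Gamma_r$, whence $\sigma_r > 0$). The three conditions on $(\mathcal{W},\Gamma^*)$ are then straightforward to verify: condition \eqref{positive} is immediate since $(\kappa,\dots,\kappa) \in \Gamma_r \cap \mathcal{K}$ for every $\kappa > 0$; condition \eqref{connected} follows from G$\mathring{\rm a}$rding's theorem that $\Gamma_r$ is an open convex cone, so $\Gamma_r \cap \mathcal{K}$ is convex and any straight segment joining two of its points whose difference lies in $\Gamma_+$ has constant tangent in $\Gamma_+$; condition \eqref{elliptic} is the classical inequality $\partial \sigma_r/\partial x_i > 0$ on $\Gamma_r$. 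For admissibility of $K$, take $\bar\kappa_0 := H_r^{1/r} > 0 = \kappa_0$; then $\sigma_r(\bar\kappa_0,\dots,\bar\kappa_0) = K$ and the partials at this diagonal point equal $\binom{n-1}{r-1}\bar\kappa_0^{r-1} > 0$.

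Finally, the uniform weak horospherical convexity hypothesis gives the uniform lower bound $\kappa_i \geq c_0 > -1$, so $\Sigma$ becomes a uniformly admissible Weingarten hypersurface in the sense of Section 2.5. Theorem \ref{weakly horospherically convex} then applies and concludes that $\Sigma$ is a horosphere. I do not expect any serious obstacle: the single point requiring a little care is condition \eqref{connected}, but convexity of $\Gamma_r$ (already invoked in Section 2) reduces it to a one-line observation, and all the substantive geometric content is already packaged inside Theorem \ref{weakly horospherically convex}.
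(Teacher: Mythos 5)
Your proposal is correct and is essentially the paper's own route: the paper already notes in Section 2.5 that taking $\mathcal{W}=\sigma_r$ makes a weakly horospherically convex, $r$-admissible $H_r$-hypersurface an admissible Weingarten hypersurface (uniformly so under the uniform convexity hypothesis), and the corollary is then read off as an immediate specialization of Theorem \ref{weakly horospherically convex}, exactly as you do. Your explicit verification of conditions \eqref{positive}--\eqref{elliptic} and of the admissibility of $K=\binom{n}{r}H_r$ via $\bar\kappa_0=H_r^{1/r}$ just spells out the bookkeeping the paper leaves implicit.
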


\section{$r$-mean curvature rigidity of horospheres and equidistan spheres}

Motivated by the following  result of M. Gromov \cite{gromov2018}:

\

{\em A hyperplane in a Euclidean space $\mathbb{R}^n$ cannot be perturbed on a compact set so
that its mean curvature satisfies $H\geq0$.}

\

R. Souam proved the  following  extension to hyperbolic space  \cite{souam2019mean}:

\

{\em Let $M$ denote a horosphere, an equidistant sphere or a hyperplane in a hyperbolic
space $\mathbb{H}^{n+1}$, $n\geq2$ and $H_M\geq 0$ its constant mean curvature. Let $\Sigma$ be a connected properly embedded $C^2$-hypersurface in $\mathbb{H}^{n+1}$ which coincides with $M$ outside a compact subset of $\mathbb{H}^{n+1}$. If the mean curvature of $\Sigma$ is $\geq H_M$, then  $\Sigma=M$. }

\

The proof in \cite{souam2019mean} is based on the tangency principle for mean curvature. We are able to extend Souam's result to the case of $r$-mean curvature by the tangency principle in \cite{fontenele2001tangency}.

\begin{thm}\label{rigidity}
Let $M$ be a horosphere or an equidistant sphere in hyperbolic space $\mathbb{H}^{n+1}$, $n\geq2$  and denote by  $H_M>0$  its $r$-mean curvature, $r\geq1$, with respect to the orientation given by the mean curvature vector. Let $\Sigma$ be a connected properly embedded  $C^2$ hypersurface in $\mathbb{H}^{n+1}$ which coincides with $M$ outside a compact subset $B$ in $\mathbb{H}^{n+1}$. Choose the orientation on $\Sigma$ such that the $r$-mean curvature $H_r$ of $\Sigma$ is equal to $H_M$ outside the compact set $B.$  With respect to this orientation, if either $H_r\geq H_M$ or $|H_r|\leq H_M$, then $\Sigma\equiv M$.
\end{thm}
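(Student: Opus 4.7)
The argument adapts Souam's sliding technique \cite{souam2019mean} to the higher order setting by invoking the tangency principle of Fontenele and Silva \cite[Theorem 1.1]{fontenele2001tangency} in place of the classical tangency principle for mean curvature. The key device is a $1$-parameter foliation $\{M_t\}_{t\in\mathbb R}$ of $\mathbb H^{n+1}$ by congruent copies of $M$, with $M_0=M$ and each leaf $r$-admissible with $H_r(M_t)\equiv H_M$. When $M$ is a horosphere, take the standard foliation by horospheres sharing the asymptotic point of $M$ (the level sets of its Busemann function). When $M$ is an equidistant sphere to a totally geodesic hyperplane $P$, let $\gamma$ be the geodesic orthogonal to $P$ through the foot of $M$ and set $M_t=\phi_t(M)$, where $\{\phi_t\}$ is the 1-parameter subgroup of hyperbolic translations along $\gamma$. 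In the upper half-space model with $\gamma$ the vertical axis through the origin, the $\phi_t$ are the Euclidean dilations $x\mapsto e^t x$, and $\{M_t\}$ is the manifestly disjoint foliation of $\mathbb H^{n+1}$ by the concentric Euclidean rescalings of $M$.

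\textbf{Sliding.} Since $\Sigma$ coincides with $M$ outside the compact set $B$, the leaf $M_t$ is disjoint from $\Sigma$ for $|t|$ sufficiently large. In the case $H_r\ge H_M$, I slide $M_t$ from $t=+\infty$ down towards $t=0$, with the direction of positive $t$ chosen so that $M_t$ moves away from $\Sigma$ along the outward normal to $\Sigma$ at points where $\Sigma=M$. If the first contact parameter $t_0:=\sup\{t>0:M_t\cap\Sigma\neq\emptyset\}$ were positive, then at any $p\in M_{t_0}\cap\Sigma$ the hypersurface $\Sigma$ lies locally on the side of $M_{t_0}$ opposite to the sliding direction with matching orientation. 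Together with the inequality $H_r(\Sigma)\ge H_M=H_r(M_{t_0})$ and the $r$-admissibility of $M_{t_0}$, this triggers the Fontenele--Silva tangency principle, yielding $\Sigma=M_{t_0}$ in a neighborhood of $p$. A standard open--closed propagation on the connected $\Sigma$ then forces $\Sigma\subseteq M_{t_0}$, contradicting $\Sigma=M\neq M_{t_0}$ outside $B$. Hence $t_0=0$ and $\Sigma$ lies weakly on one side of $M$. The case $|H_r|\le H_M$ is symmetric: slide in the opposite direction and use $H_r(\Sigma)\le H_M=H_r(M_{t_0})$ in the tangency principle to force first contact to occur only at $t=0$, so $\Sigma$ lies weakly on the other side of $M$.

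\textbf{Conclusion and main obstacle.} Once $\Sigma$ lies weakly on one side of $M$ and coincides with $M$ outside $B$, at any boundary point $q\in\partial B\cap\Sigma$ the two hypersurfaces are tangent with $\Sigma$ on the correct side, so the tangency principle applies again and gives $\Sigma=M$ in a neighborhood of $q$. The same open--closed propagation then yields $\Sigma=M$ globally. The main subtlety is the verification that the 1-parameter group of hyperbolic translations along $\gamma$ actually produces a foliation of $\mathbb H^{n+1}$ by congruent equidistant spheres with constant $r$-mean curvature equal to $H_M$; this is transparent in the half-space model as described above. Orientation bookkeeping (so that the tangency principle is applied with the correct common orientation, in particular for even $r$, where the orientation is not uniquely pinned down by $H_r=H_M$) is the only other nontrivial piece and is routine once one fixes the convention that the principal curvatures of horospheres and equidistant spheres are positive with respect to the ambient mean curvature vector.
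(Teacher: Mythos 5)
Your proposal follows essentially the same route as the paper's proof: the identical foliations (horospheres sharing the asymptotic point of $M$, respectively the dilates of the equidistant sphere along its axis, which are hyperbolic translations/homotheties in the half-space model), the same first-contact sliding argument combined with the Fontenele--Silva tangency principle with ellipticity supplied by the $r$-admissible leaf, and the same open--closed propagation; the paper merely applies the tangency argument once at the extremal leaf instead of your contradiction-plus-second-contact bookkeeping. The only point the paper spells out that you defer as ``routine'' is why the second hypothesis must be $|H_r|\le H_M$ rather than $H_r\le H_M$: at an interior contact point the chosen continuous normal of $\Sigma$ may disagree with the leaf's normal, so for odd $r$ the curvature with respect to the leaf's orientation is $(-1)^rH_r$, and it is exactly the absolute-value assumption that keeps it $\le H_M$ and makes the tangency principle applicable.
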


\begin{proof}
We take the upper half-space model of hyperbolic space.

(1)  The case of horospheres.
Consider the family of  horospheres  ${\mathcal O}_t=\{x\in\mathbb{R}^{n+1}|x^{n+1}=t\},$ $t>0$ and assume  that $M={\mathcal O}_1$. Notice that, $H_M=1.$

Let $\Sigma$ be a connected properly embedded  $C^2$ hypersurface in $\mathbb{H}^{n+1}$ with $r$-mean curvature either $H_r\geq 1$ or $H_r\leq 1$. 

 Now, assume that $H_r\geq 1$.

Since $\Sigma$ coincides with $M$ outside a compact subset $B$ of $\mathbb{H}^{n+1}$, the mean convex side of $\Sigma,$ that is  the component where the mean curvature vector points  towards,  coincides with the domain $\{x\in\mathbb{R}^{n+1}|x^{n+1}>1\},$ outside  a compact set.

We consider the largest $T\geq 1$ such that $\Sigma\cap{\mathcal O}_T\not=\emptyset$ and let $p\in \Sigma\cap{\mathcal O}_T$.
At the point $p$, $\Sigma$ and ${\mathcal O}_T$ are tangent, in a neighborhood of $p$, the horosphere ${\mathcal O}_T$ lies above $\Sigma,$ while the $r$-mean curvature of $\Sigma$   is larger or equal than the $r$-mean curvature of ${\mathcal O}_T$ (with respect to the upward normal vector). By the tangency  principle  \cite[Theorem 1.1]{fontenele2001tangency}, $\Sigma$ coincide with ${\mathcal O}_T$ in a open neighborhood of $p$.
Hence the subset $\Sigma\cap{\mathcal O}_T$ is open. As it is also closed, we get that $\Sigma$ coincides with ${\mathcal O}_T$ and $T=1$.

Now, assume that  $H_r\leq 1$. 
We consider the smallest $\tau\leq 1$ such that $\Sigma\cap{\mathcal O}_{\tau}\not=\emptyset$ and let $p\in \Sigma\cap{\mathcal O}_{\tau}$. 
At the point $p$, $\Sigma$ and ${\mathcal O}_{\tau}$ are tangent, in a neighborhood of $p$, the horosphere ${\mathcal O}_{\tau}$ lies below $\Sigma$, while the $r$-mean curvature of $\Sigma$  is less than or equal to the $r$-mean curvature of ${\mathcal O}_T$ (with respect to the upward normal vector). Notice that, we do not know in advance whether the normal vector to $\Sigma$ at $p$ points upward or downward. However,  by the assumption $|H_r|\leq H_M$, one can check that  the $r$-mean curvature of $\Sigma$ with respect to the upward normal is always less than or equal to $H_M$.  Then it follows from the tangency  principle  \cite[Theorem 1.1]{fontenele2001tangency} that  $\Sigma$ coincide with ${\mathcal O}_{\tau}$ in a open neighborhood of $p$. Hence the subset $\Sigma\cap{\mathcal O}_{\tau}$ is open. As it is also closed, we get that $\Sigma$ coincides with ${\mathcal O}_{\tau}$ and $\tau=1$. Notice that, in spite of the fact that $\Sigma$ may have non positive $r$-mean curvature, we can apply   \cite[Theorem 1.1]{fontenele2001tangency} because the principal curvature vector of ${\mathcal O}_{\tau}$ lies in the positive cone.

(2)  The case of equidistant spheres.

 We may assume that the mean curvature vector of $M$ points upward and that the  asymptotic boundary of $M$ is a $(n-1)$-sphere of  $\partial_{\infty}{\mathbb H}^{n+1}$ centered at the origin of ${\mathbb R}^{n+1}.$    As in the previous case, since $\Sigma$ coincides with $M$ outside a compact subset $B$ of $\mathbb{H}^{n+1}$, the mean convex side of $\Sigma,$  coincides with the mean convex side of $M$ outside  a compact set.
 Let   ${\mathcal E}(t)$  be a foliation in equidistant spheres, obtained rescaling $M$, with respect to the origin, such that  ${\mathcal E}(0)=M$ and ${\mathcal E}(t)$ is above $M$ for positive $t$ and below $M$ for negative  $t.$  Notice that, all the ${\mathcal E}(t)$  has the same $r$-mean curvature.
 Similar as in (1), when $H_r\geq H_M$, $\Sigma$ touches ${\mathcal E}(t)$, $t\geq 0$  from below. When $|H_r|\leq H_M$, $\Sigma$ touches ${\mathcal E}(t)$, $t\leq 0$ from above. In both cases, by tangency principle, $\Sigma=M$.

\end{proof}


\nocite{*}


\bibliographystyle{amsplain}
\bibliography{reference}

\end{document}